\theoremstyle{plain}
\newtheorem{thm}{\protect\theoremname}
  \theoremstyle{plain}
  \newtheorem{prop}[thm]{\protect\propositionname}
  \theoremstyle{plain}
  \newtheorem{cor}[thm]{\protect\corollaryname}
  \theoremstyle{plain}
  \newtheorem{lem}[thm]{\protect\lemmaname}
  \theoremstyle{remark}
  \newtheorem{rem}[thm]{\protect\remarkname}
\date{}
  \providecommand{\corollaryname}{Corollary}
  \providecommand{\lemmaname}{Lemma}
  \providecommand{\propositionname}{Proposition}
  \providecommand{\remarkname}{Remark}
\providecommand{\theoremname}{Theorem}
\begin{document}

\title{Multiset metrics on bounded spaces%
\thanks{MSC primary 51F99,03E70; secondary 54E50,62H30,68T10,91B12,92C42.%
}}

\author{Stephen M. Turner}
\maketitle
\begin{abstract}
We discuss five simple functions on finite multisets of metric spaces.
The first four are all metrics iff the underlying space is bounded
and are complete metrics iff it is also complete. Two of them, and
the fifth function, all generalise the usual Hausdorff metric on subsets.
Some possible applications are also considered.
\end{abstract}

\section{Introduction}

Metrics on subsets and multisets (subsets-with-repetition-allowed)
of metric spaces have or could have numerous fields of application
such as credit rating, pattern or image recognition and synthetic
biology. We employ three related models (called $E,F$ and $G$) for
the space of multisets on the metric space $(X,d)$. On each of $E,F$
we define two closely-related functions. These four functions all
turn out to be metrics precisely when $d$ is bounded, and are complete
iff $d$ is also complete. Another function studied in model $G$
has the same properties for (at least) uniformly discrete $d$. $X$
is likely to be finite in many applications anyway.

We show that there is an integer programming algorithm for those in
model $E$. The three in models $F$ and $G$ generalise the Hausdorff
metric. Beyond finiteness, no assumptions about multiset sizes are
required.

Various types of multiset metric on sets have been described{[}\prettyref{sub:Other-metrics}{]},
but the few that incorporate an underlying metric only refer to $\mathbb{R}$
or $\mathbb{C}$. The simple and more general nature of those described
here suggests that there may be other interesting possibilities.

In this section, after setting out notation and required background,
we mention briefly the existing work in this field. The following
three sections are each dedicated to one of $E$, $F$ and $G$.

\subsection{Notation: metric spaces}

$R$ is the non-negative reals, $\mathbb{N}$ includes $0$, and $(X,d)$
is a metric space of more than one element. $d$ is \emph{uniformly
discrete} if $\exists a>0$ such that $d(x,z)\geq a$ whenever $x\neq z$,
and two metrics on $X$ are \emph{equivalent} if they induce the same
topology.

$d$ is \emph{complete} iff every Cauchy sequence converges (to a
point of $X$), and $d$ is \emph{compact} iff every sequence, Cauchy
or not, has a subsequence that converges to a point of $X$.

The well-known \emph{Hausdorff metric} $d_{H}$ on the space $H$
of all non-empty compact subsets of $X$ is defined for $A,B\in H$
by 
\[
d_{H}(A,B)=\max(\max_{x\in A}\min_{y\in B}d(x,y),\max_{y\in B}\min_{x\in A}d(x,y))
\]

in which compactness guarantees that all these extrema are attained.
We use later the simple fact that if $d$ does not satisfy the triangle
inequality, neither does $d_{H}$. It is a standard fact\cite[pp.71-72]{Edg90}
that $d_{H}$ is complete if $d$ is. The converse is also true%
\footnote{Let $x_{i}$ be a non-convergent Cauchy sequence in $X$ so that $S_{i}=\{x_{i}\}$
is Cauchy in $H$ with putative limit $S\in H$, so $S$ is non-empty.
If $S=\{x\}$ then $d(x_{i},x)\rightarrow0$. Thus $S$ contains distinct
$a,b\in X$. But then $d_{H}(S_{i},S)\geq\max(d(x_{i},a),d(x_{i},b))\geq\frac{d(x_{i},a)+d(x_{i},b)}{2}\geq\frac{d(a,b)}{2}$.%
}. A convenient heuristic (for finite $A,B$) is to label the rows
(the columns) of a matrix by the elements of $A$ (of $B$), with
the corresponding $d$-distances as entries. Then $d_{H}(A,B)$ is
the largest of all row and column minima. 

\label{quotient}Given an equivalence relation $\sim$ on $X$, and
$\alpha,\beta\in X/\!\sim$ write 
\[
D(\alpha,\beta)=\inf d(a,p_{1})+d(q_{1},p_{2})+d(q_{2},p_{3})+\ldots+d(q_{n-1},p_{n})+d(q_{n},b)
\]

where $n\in\mathbb{N}$, $a\in\alpha$, $b\in\beta$ and $p_{i}\sim q_{i}$
for each $i$. In general $D$ is a \emph{pseudometric} on $X/\!\sim$,
that is $D(\alpha,\beta)=0\nRightarrow\alpha=\beta$, though $D$
does satisfy the other metric axioms. Clearly $D(\alpha,\beta)\leq\inf_{a\in\alpha,b\in\beta}d(a,b)$.
To simplify notation, we adopt the conventions that $a=q_{0},b=p_{n+1}$
and $p_{i}\nsim p_{i+1}$ for any $i$.

\subsection{Notation: multisets}

A recent survey article on multisets and their applications is \cite{SIYS07}.
The notation and terminology in this article mostly follow \cite{DD09}
and \cite{Pet97}. A convenient definition of multiset also introduces
the model $E${[}\prettyref{sec:Model E}{]}.\emph{ }

A \emph{multiset} of a set $S$ is a function $e:S\mathbb{\rightarrow N}$
taking each $s\in S$ to its \emph{multiplicity} $e(s)$. The \emph{root
set} $R(e)$ of $e$ is $\{s\in S:e(s)>0\}$, always assumed finite.
The \emph{cardinality}%
\footnote{Called the \emph{counting measure} in \cite{DD09}.%
} of $e$ is $C(e)=\sum_{s\in S}e(s)$. So $E$ is the set of functions
of finite support from $S$ to $\mathbb{N}$. 

We denote by $e_{s}$, for $s\in S$, the multiset consisting of a
single copy of $s$ and define $e_{0}$ by $R(e_{0})=\phi$. Naturally
any multiset has a unique form $\sum_{s\in S}e(s)e_{s}$; we can add
or subtract them if all the arithmetic is within $\mathbb{N}$. 

$E$ forms a lattice under the operations $\cap$ and $\cup$ defined
for $e,f\in E$ by $e\cap f(s)=\min(e(s),f(s))$ and $e\cup f(s)=\max(e(s),f(s))$.
The \emph{multiset difference} $e_{f}$ is $e-e\cap f$, and $e$
and $f$ are \emph{disjoint} if $e\cap f=e_{0}$. For instance $e_{f}$
and $f_{e}$ are disjoint. The \emph{symmetric difference} of $e$
and $f$ is $e\triangle f=e_{f}+f_{e}=e\cup f-e\cap f$. $e$ is a
\emph{submultiset} of $f$, written $e\subseteq f$, if $e(s)\leq f(s)\forall s$
and of course this is equivalent to $e\cap f=e$ or $e_{f}=e_{0}$. 

A \emph{function} $h$ from $e$ to $f$ is simply a function $h$
from $R(e)$ to $R(f)$, to guarantee that identical elements of $e$
are not mapped to distinct elements of $f$. We say that $h$ is an
\emph{injection} (resp. \emph{surjection}, \emph{bijection}), according
as (i) its restriction to the root sets has this property in the ordinary
sense, and (ii) for every $s\in R(e)$, $e(s)\leq f(h(s))$\emph{
(resp.} $e(s)\geq f(h(s))$, both of the preceding).

\subsection{Other metrics on multisets\label{sub:Other-metrics}}

We give a short account of the multiset metrics listed at \cite[pp.51-52]{DD09},
described elsewhere in that book, and regrouped here according to
the main idea.
\begin{itemize}
\item The \emph{matching distance}\cite[p.47]{DD09} is defined by $\inf_{g}\max_{x\in e}d(x,g(x))$
where $g$ runs over all (multiset) bijections from $e$ to $f$.
These are used in size theory (image recognition), where a geometric
trick is used to ensure that bijections are always defined. A survey
article is \cite{dFL06}.
\item The \emph{metric space of roots}\cite[p.221]{DD09} is defined on
multisets of $\mathbb{C}$ of fixed cardinality $n$, each identified
with the monic polynomial of which it is the set of roots. Two such
$u_{1},\ldots,u_{n}$ and $v_{1},\ldots,v_{n}$ are separated by $\min_{\rho}\max_{1\leq j\leq n}|u_{j}-v_{\rho(j)}|$
as $\rho$ ranges over the permutations of $1,\ldots,n$. More details
are in \cite{CM06}\emph{.}
\item Petrovsky has defined several metrics\cite[p.52]{DD09} on $E$ using
a measure $\mu:E\rightarrow\mathbb{R}$, $\mu(e)=\sum_{s\in S}\lambda(s)e(s)$
where $\lambda:S\rightarrow\mathbb{R}^{+}$. Thus $\mu=C$ when $\lambda=1$.
One of them is $d(e,f)=\mu(e\triangle f)=\mu(e_{f})+\mu(f_{e})$ and
the others are variants\cite{Pet97,Pet03}. They are related to the
Jaccard and Hamming metrics on sets\cite[p.299, p.45]{DD09}, and
seem to be primarily used in cluster analysis (decision making).
\item The $\mu$\emph{-metric}\cite[p.281]{DD09} on so-called phylogenetic
$X$-trees (computational biology), again is based on symmetric difference.
See \cite{10.1109/TCBB.2007.70270} for more details.
\item The \emph{bag distance}\cite[p.204]{DD09}\emph{,} used in string
matching, is defined to be $\max(C(e_{f}),C(f_{e}))$. 
\item In approximate string matching (for instance in bioinformatics), so-called
\emph{$q$-gram similarity}\cite[p.206]{DD09}\emph{ }is defined.
This is not a metric.
\end{itemize}
Note that there are two dominant ideas: minimising over multiset bijections,
and symmetric differences. The latter do not reflect any structure
on $S$ except perhaps if we argue that multiplicity may depend on
that structure. To some extent, the metrics described later mix these
two paradigms.

There are a number of other standard possibilities, such as the metric
induced on $E$ by any injection into a metric space, or those given
by taking the sum (or the supremum) of the $|e(s)-f(s)|$ where $e,f\in E$
and $s\in S$. Any metric on $\mathbb{Z^{+}}$ (multisets on the prime
numbers) is also an example.

\section{The multiset model $E$\label{sec:Model E}}

If $a,c\in E$ and $C(a)\leq C(c)$, we find a submultiset $c'$ of
$c$ of cardinality $C(a)$ so that, matching elements of $a$ and
$c'$ as described below, the sum of the $d$-distances is minimised,
and then we add a constant. The result, denoted $d_{E}$, though resembling
the matching distance just described, actually generalises the bag
distance. The other function, $d_{Em}$ ($m$ for 'mean') is obtained
by dividing $d_{E}$ by $C(c)$.

We choose $M>0$, and define $\theta=\frac{\sup d}{M}$ when $d$
is bounded. Given $a,c\in E$, suppose that $C(a)\leq C(c)$ and $c\neq e_{0}$.
Write down all the elements in both in arbitrary order, \emph{viz.,
$a_{1},a_{2},\ldots,a_{C(a)}$} and \emph{$c_{1},c_{2},\ldots,c_{C(c)}$
}where for each $x\in X$, $\#\{j:a_{j}=x\}=a(x)$ and $\#\{j:c_{j}=x\}=c(x)$.
(In other terminology, we \emph{parametrise} the multisets by enough
positive integers.)

Let $\gamma$ be a member of the permutation group $G_{c}$ on $C(c)$
elements, acting on the subscripts in the $c$-sequence. Write\emph{
}
\[
d^{\gamma}(a,c)=\sum_{j=1}^{j=C(a)}d(a_{j},c_{\gamma(j)})+M|C(c)-C(a)|
\]

and define the following functions $d_{E}$ and $d_{Em}$ from $E\times E$
to $R$. 

\[
d_{E}(a,c)=\min_{\gamma\in G_{c}}d^{\gamma}(a,c)\qquad\textrm{and}\qquad d_{Em}(a,c)=\frac{d_{E}(a,c)}{\max(C(a),C(c))}
\]

with $d_{Em}(e_{0},e_{0})=0$. We call $M|C(c)-C(a)|$ the \emph{notional
part} of $d_{E}(a,c)$. The mappings $\gamma$ regarded as from $a$
to $c$, need not be multiset functions.
\begin{prop}
If $d$ is unbounded then $d_{E}$ and $d_{Em}$ are non-metrics for
all $M$. If $d$ is bounded, then $d_{E}$ is a metric iff $\theta\leq2$,
and $d_{Em}$ is a metric iff $\theta\leq1$.\end{prop}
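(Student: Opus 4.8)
\emph{Overview and the easy direction.} For both functions, symmetry, non-negativity, and ``distance zero iff the multisets coincide'' hold for every $M>0$ directly from the definitions, once one notes that $d_E$ is made well defined and symmetric by always parametrising the multiset of smaller cardinality first (the two orderings agreeing when the cardinalities are equal), and that the conventions about $e_0$ and the empty sum are consistent; so the whole issue is the triangle inequality. For the necessity, and hence also the unbounded case where one reads $\sup d=\infty$: if $\sup d>2M$, pick $x,y$ with $d(x,y)>2M$ and note $d_E(e_x,e_0)=d_E(e_0,e_y)=M$ while $d_E(e_x,e_y)=d(x,y)$, so $e_0$ violates the triangle inequality and $d_E$ being a metric forces $\theta\le2$; if $\sup d>M$, pick $x\neq y$ with $d(x,y)>M$ and note $d_{Em}(e_x,e_x+e_y)=d_{Em}(e_x+e_y,e_y)=M/2$ while $d_{Em}(e_x,e_y)=d(x,y)$, so $d_{Em}$ being a metric forces $\theta\le1$.

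\emph{Triangle inequality for $d_E$ when $\theta\le2$.} Fix $a,b,c$, put $p=C(a),q=C(b),r=C(c)$, and by symmetry assume $p\le r$, so $b$ is the smallest, the largest, or the middle one. Start from parametrisations achieving $d_E(a,b)$ by a matching $\gamma$ and $d_E(b,c)$ by a matching $\delta$, and build a matching $\epsilon$ from $a$ to $c$ by composing $\gamma$ with $\delta$ through every element of $b$ that both matchings meet, and pairing the remaining elements of $a$ and of $c$ off arbitrarily. Each composed pair costs at most the sum of its two constituent $d$-distances (triangle inequality in $X$), and each leftover pair costs at most $\sup d\le2M$. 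Since the number of leftover pairs is at most $\tfrac12\bigl(|p-q|+|q-r|-|p-r|\bigr)$, the total leftover cost is at most $M\bigl(|p-q|+|q-r|-|p-r|\bigr)$, which is exactly the surplus of the right-hand notional terms over the left-hand one; hence $d_E(a,c)\le d^{\epsilon}(a,c)\le d_E(a,b)+d_E(b,c)$, the notional arithmetic closing up (in fact with equality). Only the leftover pairs use $\theta\le2$, and none occur when $b$ is the middle one.

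\emph{Triangle inequality for $d_{Em}$ when $\theta\le1$.} Keep $p\le r$. If $q\le r$, that is, $b$ is not strictly the largest, then both denominators $\max(p,q)$ and $\max(q,r)$ on the right are at most $r=\max(p,r)$, so dividing the inequality $d_E(a,c)\le d_E(a,b)+d_E(b,c)$ just proved through by $r$ already gives $d_{Em}(a,c)\le d_{Em}(a,b)+d_{Em}(b,c)$. The remaining case $r<q$ is the crux: both right-hand denominators equal $q$, so what must be shown is $q\,d_E(a,c)\le r\bigl(d_E(a,b)+d_E(b,c)\bigr)$, and the composition bound alone is too weak, since it carries the non-notional parts $S_{ab}:=d_E(a,b)-M(q-p)$ and $S_{bc}:=d_E(b,c)-M(q-r)$ with coefficient $q$ on the left but only $r$ on the right. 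The remedy, using $\theta\le1$, is to combine two estimates for $d_E(a,c)$: the crude $d_E(a,c)\le M\max(p,r)=Mr$ (each matched $d$-distance is $\le\sup d\le M$, plus a notional $M(r-p)$), and the composition estimate $d_E(a,c)\le S_{ab}+S_{bc}+M(q-p)$ (at most $q-r$ leftover pairs, each costing $\le\sup d\le M$). Writing $T=S_{ab}+S_{bc}\ge0$, so that $d_E(a,b)+d_E(b,c)=T+M(2q-p-r)$, a one-line computation shows the first estimate yields the claim when $T\ge M(p+r-q)$ and the second when $T\le M(p+r-q)$; since every $T\ge0$ lies in one of these two ranges, the triangle inequality follows.

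\emph{Main obstacle.} Everything except this last case is routine bookkeeping of notional terms once the ``compose the two matchings through the middle multiset'' idea is in place. The one genuinely non-routine step is that final dichotomy for $d_{Em}$ with $b$ strictly largest: neither the ``everything is $\le Mr$'' bound nor the composition bound suffices alone, and one has to notice that they cover complementary ranges of $T=S_{ab}+S_{bc}$, meeting exactly at the cross-over value $T=M(p+r-q)$.
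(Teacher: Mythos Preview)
Your proof is correct and follows the same core strategy as the paper: compose optimal matchings through $b$, bound the ``leftover'' pairs using $\sup d\le 2M$ (respectively $\le M$ for $d_{Em}$), and keep track of the notional terms. The packaging differs in two places worth noting. For the necessity of $\theta\le2$ the paper uses the midpoint $e_x+e_y$ rather than your $e_0$ (both work), and for the sufficiency it treats the three cases $q<p$, $p\le q\le r$, $q>r$ by hand rather than via your unified leftover-count formula $\tfrac12(|p-q|+|q-r|-|p-r|)$; your formulation is tidier but the underlying inequalities are identical. In the hard case $q>r$ for $d_{Em}$, the paper avoids your dichotomy on $T$ by a convex-combination argument: from the composition bound it obtains $d_E(a,b)+d_E(b,c)\ge d_E(a,c)+M(q-r)$, divides by $q$, and observes that $\frac{r}{q}\,d_{Em}(a,c)+\frac{q-r}{q}\,M\ge d_{Em}(a,c)$ since $d_{Em}(a,c)\le M$. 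This single chain is exactly equivalent to combining your two estimates, so the difference is purely presentational.
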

\begin{proof}
Only the triangle inequality need be verified or could fail. Let $x,y\in X$,
with $x\neq y$. Then 
\[
d_{E}(e_{x},e_{x}+e_{y})+d_{E}(e_{x}+e_{y},e_{y})-d_{E}(e_{x},e_{y})=2M-d(x,y)
\]
So if $d_{E}$ is a metric, $d$ is bounded and $\theta\leq2$. The
same argument for $d_{Em}$ implies $\theta\leq1$.

From now on we take $a,b,c\in E$, and assume $C(a)\leq C(c)$. We
look first at $d_{E}$ and suppose $\theta\leq2$: as motivation,
we could verify that whenever 
\[
2C(b)\leq C(a)(2-\theta)\quad\textrm{or}\quad2C(b)\geq\theta C(a)+2C(c)
\]
 then the notional parts alone in $d_{E}(a,b)+d_{E}(b,c)$ add to
at least 
\[
M(C(c)-C(a))+\theta MC(a)\geq d_{E}(a,c)
\]
The value of $C(b)$ determines three cases, all with similar reasoning.

Case $C(b)<C(a)$: there exist $\alpha\in G_{a}$ and $\gamma\in G_{c}$
such that

\[
d_{E}(a,b)=\sum_{i=1}^{C(b)}d(b_{i},a_{\alpha(i)})+M(C(a)-C(b))
\]
 and
\[
d_{E}(b,c)=\sum_{i=1}^{C(b)}d(b_{i},c_{\gamma(i)})+M(C(c)-C(b))
\]

So
\begin{align*}
d_{E}(a,b)+d_{E}(b,c) & \geq\sum_{i=1}^{C(b)}d(a_{\alpha(i)},c_{\gamma(i)})+M(C(c)-C(a))+2M(C(a)-C(b))
\end{align*}
\begin{alignat}{1}
 & \geq\sum_{i=1}^{C(a)}d(a_{\alpha(i)},c_{\gamma(i)})+M(C(c)-C(a))+(2-\theta)M(C(a)-C(b))\label{eq:B}
\end{alignat}

having added, for each $i$ beyond $C(b)$, the non-positive $d(a_{\alpha(i)},c_{\gamma(i)})-\theta M$.
Then \eqref{eq:B} is at least

\[
d_{E}(a,c)+M(2-\theta)(C(a)-C(b))\geq d_{E}(a,c)
\]

Case $C(a)\leq C(b)\leq C(c)$: there exist $\beta\in G_{b}$ and
$\gamma\in G_{c}$ such that 

\[
d_{E}(a,b)=\sum_{i=1}^{C(a)}d(a_{i},b_{\beta(i)})+M(C(b)-C(a))
\]

and
\[
d_{E}(b,c)=\sum_{i=1}^{C(b)}d(b_{i},c_{\gamma(i)})+M(C(c)-C(b))
\]

Then 
\begin{equation}
d_{E}(a,b)+d_{E}(b,c)=M(C(c)-C(a))+\sum_{i=1}^{C(a)}d(a_{i},b_{\beta(i)})+\sum_{i=1}^{C(b)}d(b_{i},c_{\gamma(i)})\label{eq:C}
\end{equation}

Now $\sum_{i=1}^{C(b)}d(b_{i},c_{\gamma(i)})=\sum_{i=1}^{C(b)}d(b_{\beta(i)},c_{\gamma\beta(i)})$
since $\beta\in G_{b}$, and so \eqref{eq:C} is at least

\[
M(C(c)-C(a))+\sum_{i=1}^{C(a)}d(a_{i},c_{\gamma\beta(i)})+\sum_{i=1+C(a)}^{C(b)}d(b_{\beta(i)},c_{\gamma\beta(i)})
\]

which is at least $d_{E}(a,c)$, in this case for any $\theta$.

Case $C(b)>C(c)$: For some $\tau\in G_{c}$, 
\[
d_{E}(a,c)=M(C(c)-C(a))+\sum_{i=1}^{C(a)}d(a_{i},c_{\tau(i)})
\]
and $\rho,\sigma\in G_{b}$ are given by

\[
d_{E}(a,b)=\sum_{i=1}^{C(a)}d(a_{i},b_{\rho(i)})+M(C(b)-C(a))
\]

and

\[
d_{E}(b,c)=\sum_{i=1}^{C(c)}d(b_{\sigma(i)},c_{i})+M(C(b)-C(c))
\]

We write $\omega=\sigma^{-1}\rho\in G_{b}$, which takes any subscript
of $a$ to a subscript of $c$, and define 
\[
l=\#\{1\leq i\leq C(a):\rho(i)=\sigma(j)\textrm{ for some }j\textrm{ in }1,\ldots,C(c)\}
\]
Then $l\geq C(a)+C(c)-C(b)$ since $\rho(1),\ldots,\rho(C(a))$ and
$\sigma(1),\ldots,\sigma(C(c))$ are all chosen from $1,2,\ldots,C(b)$.
Dropping all terms with $i>l$, $d_{E}(a,b)+d_{E}(b,c)$ is at least

\begin{equation}
\sum_{i=1}^{l}[d(a_{i},b_{\rho(i)})+d(b_{\rho(i)},c_{\omega(i)})]+2M(C(b)-C(c))+M(C(c)-C(a))\label{eq:A}
\end{equation}

Just as before, $d(a_{i},c_{\omega(i)})-\theta M\leq0$, so \eqref{eq:A}
is at least as big as

\[
\sum_{i=1}^{C(a)}d(a_{i},c_{\omega(i)})+2M(C(b)-C(c))-\theta M(C(a)-l)+M(C(c)-C(a))
\]

Now $C(b)-C(c)\geq C(a)-l\geq0$ so we get 

\[
d_{E}(a,b)+d_{E}(b,c)\geq\sum_{i=1}^{C(a)}d(a_{i},c_{\omega(i)})+M(C(c)-C(a))+M(2-\theta)(C(a)-l)\geq d_{E}(a,c)
\]

concluding the proof that $d_{E}$ is a metric.

Passing to $d_{Em}$, we now assume $\theta\leq1$, which implies
$d_{Em}(a,c)\leq M$. If $C(b)\leq C(c)$ it is certainly true that

\[
d_{Em}(a,b)+d_{Em}(b,c)\geq d_{Em}(a,c)
\]
so we will suppose $C(b)>C(c)$ and reuse the notation just employed
for $d_{E}$. Using \eqref{eq:A} again, we can write

\[
C(b)(d_{Em}(a,b)+d_{Em}(b,c))\geq(C(c)-l)M+\sum_{i=1}^{l}d(a_{i},c_{\omega(i)})+(2C(b)-2C(c)-C(a)+l)M
\]

Since $\theta\leq1$, the sum of the first two terms on the right
is at least $C(c)d_{Em}(a,c)$ and we also have $2C(b)-2C(c)-C(a)+l\geq C(b)-C(c)>0$,
so 

\[
d_{Em}(a,b)+d_{Em}(b,c)\geq\frac{C(c)d_{Em}(a,c)+M(C(b)-C(c))}{C(b)}\geq d_{Em}(a,c)
\]
as it is a convex combination of $d_{Em}(a,c)$ and $M$.
\end{proof}

\subsection{Simple properties of $d_{E}$ and $d_{Em}$}

We start with some computational results about $d_{E}$. The first
says that $a$ and $c$ can be taken as disjoint.
\begin{prop}
\label{pro:disj}$d_{E}(a,c)=d_{E}(a_{c},c_{a})$\end{prop}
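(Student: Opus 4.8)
The plan is to split each $d_E$-value into its notional part and its minimal matching sum, check that the notional parts agree, and then that the minimal sums agree. Assume without loss of generality that $C(a)\le C(c)$. Since $C(a_c)=C(a)-C(a\cap c)$ and $C(c_a)=C(c)-C(a\cap c)$, this gives $C(a_c)\le C(c_a)$ and $C(c)-C(a)=C(c_a)-C(a_c)$, so the notional parts of $d_E(a,c)$ and $d_E(a_c,c_a)$ coincide. Writing $d_E(a,c)$ via an injection $\iota$ of the parametrising indices of $a$ into those of $c$ (every such $\iota$ extends to a member of $G_c$, and conversely), it remains to show that $\min_\iota\sum_{j=1}^{C(a)}d(a_j,c_{\iota(j)})$ is unchanged on passing from $(a,c)$ to $(a_c,c_a)$. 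The degenerate cases $a\cap c=e_0$ (then $a_c=a$, $c_a=c$) and $c=e_0$ are immediate, so assume $a\cap c\neq e_0$.

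For ``$\le$'', take an optimal injection for $(a_c,c_a)$ and extend it by sending the $C(a\cap c)$ copies in $a$ forming $a\cap c$ to equal copies in $c$; these pairs contribute $0$, so we get an injection for $(a,c)$ of the same sum, and the minimum for $(a,c)$ is no larger. For ``$\ge$'' I run an exchange argument on an optimal injection $\iota$ for $(a,c)$. Call a pair $(a_j,c_{\iota(j)})$ \emph{diagonal} if $a_j=c_{\iota(j)}$, and let $P$ be the number of diagonal pairs; always $P\le C(a\cap c)$. If, for some $x\in R(a\cap c)$, fewer than $\min(a(x),c(x))$ copies of $x$ in $a$ lie in diagonal pairs, then (as $\iota$ is total) some $a_i=x$ has $c_{\iota(i)}=y\neq x$, and some copy $c_m=x$ is either outside the image of $\iota$ or equals $c_{\iota(l)}$ for some $a_l=v\neq x$. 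Re-route $\iota$ so that $i\mapsto m$ and, in the second case, $l\mapsto\iota(i)$; the sum changes by $-d(x,y)\le0$ in the first case and by $d(v,y)-d(x,y)-d(v,x)\le0$ in the second, using the triangle inequality for $d$. Hence $\iota$ stays optimal, no diagonal pair is destroyed, and $P$ strictly increases. After at most $C(a\cap c)$ steps we reach an optimal injection in which, for every $x$, exactly $\min(a(x),c(x))$ copies of $x$ in $a$ are diagonal; deleting these zero-cost pairs leaves an injection of the indices of $a_c$ into those of $c_a$ with the same sum, so the minimum for $(a_c,c_a)$ is no larger than that for $(a,c)$. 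Combining the two inequalities yields $d_E(a,c)=d_E(a_c,c_a)$.

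The one place needing care is the bookkeeping for the exchange step: one must verify that such a step is available exactly when $P<C(a\cap c)$, that it never decreases the number of diagonal copies of \emph{any} element (so that $P$ genuinely increases and the process terminates), and that the degenerate sub-cases ($v=y$, or $c_m$ outside the image of $\iota$) cause no trouble. Note that boundedness of $d$ plays no role here; only the triangle inequality is used.
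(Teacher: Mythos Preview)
Your proof is correct and follows essentially the same approach as the paper: both arguments show, via a triangle-inequality exchange, that some optimal matching pairs up maximally many identical elements, so that removing these zero-cost pairs passes between $(a,c)$ and $(a_c,c_a)$ without changing the sum. Your write-up is simply more explicit---you separate the notional parts, state the two inequalities individually, and distinguish the case where the target $c_m$ is unmatched from the case where it is matched from some $a_l\neq x$---whereas the paper compresses all this into the single observation that if $a_j=c_{\gamma(k)}$ then swapping $\gamma(j)$ and $\gamma(k)$ does not increase the sum.
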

\begin{proof}
Assume $C(a)\leq C(c)$. We have to show that among the permutations
$\gamma$ in $G_{c}$ which minimise 
\[
d^{\gamma}(a,c)=\sum_{j=1}^{C(a)}d(a_{j},c_{\gamma(j)})+M|C(c)-C(a)|
\]

there exists one in which maximally many identical elements (with
multiplicity) of $a$ and $c$ are matched up by $\gamma$. But if
$a_{j}=c_{\gamma(k)}$ then 
\[
d(a_{j},c_{\gamma(k)})+d(a_{k},c_{\gamma(j)})\leq d(a_{j},c_{\gamma(j)})+d(a_{k},c_{\gamma(k)})
\]
 is certainly true, so if we start with any $\gamma$ that minimises
$d^{\gamma}(a,c)$, we can find another with the required property.\end{proof}
\begin{cor}
If $\frac{d}{M}$ is the discrete metric then $d_{E}(a,c)=M\max(C(a_{c}),C(c_{a}))$,
and so $d_{E}$ generalises the bag distance.
\end{cor}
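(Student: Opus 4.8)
The plan is to reduce at once to the disjoint case and then observe that the minimisation over $G_{c}$ collapses. First I would unwind the hypothesis: saying $\frac{d}{M}$ is the discrete metric means $d(x,x)=0$ and $d(x,z)=M$ whenever $x\neq z$, so in particular $\sup d=M$ and $\theta=1$; hence by the preceding Proposition $d_{E}$ is genuinely a metric in this setting, though for the formula itself we need only the definition of $d_{E}$.

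Assume without loss of generality that $C(a)\leq C(c)$, so that also $C(a_{c})\leq C(c_{a})$ (since $C(c_{a})-C(a_{c})=C(c)-C(a)$), and apply Proposition~\ref{pro:disj} to replace the pair $(a,c)$ by the disjoint pair $(a_{c},c_{a})$. Because $a_{c}$ and $c_{a}$ are disjoint, $R(a_{c})\cap R(c_{a})=\phi$, so any element parametrising $a_{c}$ is distinct, as a point of $X$, from any element parametrising $c_{a}$; consequently $d\bigl((a_{c})_{j},(c_{a})_{\gamma(j)}\bigr)=M$ for every $j$ and every $\gamma\in G_{c_{a}}$. Therefore
\[
d^{\gamma}(a_{c},c_{a})=\sum_{j=1}^{C(a_{c})}M+M\bigl(C(c_{a})-C(a_{c})\bigr)=M\,C(c_{a})
\]
for all $\gamma$, the right-hand side being independent of the permutation; taking the minimum over $G_{c_{a}}$ gives $d_{E}(a_{c},c_{a})=M\,C(c_{a})=M\max\bigl(C(a_{c}),C(c_{a})\bigr)$, and hence $d_{E}(a,c)=M\max\bigl(C(a_{c}),C(c_{a})\bigr)$ by Proposition~\ref{pro:disj}. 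The degenerate cases $a_{c}=e_{0}$ (i.e. $a\subseteq c$) and $C(a_{c})=C(c_{a})=0$ (i.e. $a=c$) are subsumed by the same computation, with an empty sum where appropriate.

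Finally, putting $M=1$ makes $\frac{d}{M}=d$ the discrete metric itself, and the formula reads $d_{E}(a,c)=\max(C(a_{c}),C(c_{a}))$, which is precisely the bag distance $\max(C(e_{f}),C(f_{e}))$ of \cite[p.204]{DD09} under the identification $e=a$, $f=c$; so $d_{E}$ generalises the bag distance. I do not anticipate a genuine obstacle here: the only points needing care are keeping the notation $a_{c},c_{a}$ aligned with $e_{f},f_{e}$ and checking that the normalising hypothesis $C(a)\leq C(c)$ puts the maximum on the $c$-side, both of which are routine.
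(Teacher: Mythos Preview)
Your proposal is correct and follows exactly the argument the paper intends: the corollary is stated without proof as an immediate consequence of Proposition~\ref{pro:disj}, and you have simply written out that implicit step (reduce to the disjoint pair $(a_{c},c_{a})$, observe all cross-distances equal $M$, so every $d^{\gamma}$ equals $M\,C(c_{a})$). There is nothing to add or correct.
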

The next result is needed to establish completeness.
\begin{lem}
\label{squeeze} If $x,y\in X$, $a,c\in E$ and $C(a)=C(c)=n$, then

\[
|d_{E}(a+e_{x},c+e_{y})-d_{E}(a,c)|\leq d(x,y)
\]
\end{lem}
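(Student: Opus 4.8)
The plan is to exploit the variational (minimum-over-permutations) definition of $d_E$ directly, establishing the two inequalities $d_E(a+e_x,c+e_y)\le d_E(a,c)+d(x,y)$ and $d_E(a,c)\le d_E(a+e_x,c+e_y)+d(x,y)$ separately; each follows by taking an optimal matching for one side and perturbing it into a feasible matching for the other. Since $C(a)=C(c)=n$, both pairs $(a,c)$ and $(a+e_x,c+e_y)$ have equal cardinalities, so the notional parts $M|C(c)-C(a)|$ all vanish and we are comparing pure matching sums over $G_n$ and $G_{n+1}$ respectively. Parametrise $a$ by $a_1,\dots,a_n$ and $c$ by $c_1,\dots,c_n$, and let $a_{n+1}=x$, $c_{n+1}=y$ be the extra elements.

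For the first inequality, let $\gamma\in G_n$ attain $d_E(a,c)=\sum_{j=1}^n d(a_j,c_{\gamma(j)})$. Extend $\gamma$ to $\gamma^+\in G_{n+1}$ by setting $\gamma^+(n+1)=n+1$ (and $\gamma^+=\gamma$ otherwise). Then $\gamma^+$ is feasible for the $(a+e_x,c+e_y)$ problem, so
\[
d_E(a+e_x,c+e_y)\le\sum_{j=1}^n d(a_j,c_{\gamma(j)})+d(x,y)=d_E(a,c)+d(x,y).
\]
For the reverse inequality, let $\delta\in G_{n+1}$ attain $d_E(a+e_x,c+e_y)=\sum_{j=1}^{n+1}d((a+e_x)_j,(c+e_y)_{\delta(j)})$. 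Say $\delta(n+1)=k$ and $\delta(\ell)=n+1$ for some indices $k,\ell\in\{1,\dots,n+1\}$. If $\ell=n+1$ (so $k=n+1$ too), then $\delta$ restricted to $\{1,\dots,n\}$ is a permutation in $G_n$ and we immediately get $d_E(a,c)\le d_E(a+e_x,c+e_y)-d(x,y)\le d_E(a+e_x,c+e_y)+d(x,y)$. Otherwise $k,\ell\le n$: modify $\delta$ by re-routing — define $\delta'\in G_n$ by $\delta'(\ell)=k$ and $\delta'(j)=\delta(j)$ for $j\le n$, $j\ne\ell$. Comparing the two matchings, the terms we lose are $d(a_\ell,y)+d(x,c_k)$ (the two edges through the deleted vertices $x$ and $y$) and the term we gain is $d(a_\ell,c_k)$; by the triangle inequality in $(X,d)$,
\[
d(a_\ell,c_k)\le d(a_\ell,y)+d(y,x)+d(x,c_k)=\big(d(a_\ell,y)+d(x,c_k)\big)+d(x,y).
\]
Hence $d_E(a,c)\le\sum_{j\le n}d(a_j,c_{\delta'(j)})\le d_E(a+e_x,c+e_y)+d(x,y)$, and combining the two bounds gives the claim.

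The only point requiring care — the main (mild) obstacle — is the bookkeeping in the case analysis for the reverse inequality: one must check that after deleting the two subscripts $n+1$ from the domain and codomain and splicing $\ell\mapsto k$, the result is genuinely a bijection of $\{1,\dots,n\}$, and that exactly the right three $d$-terms are exchanged. The degenerate sub-case $\ell=n+1$ must be handled separately because then there is no splice to perform. Everything else is a one-line application of the triangle inequality in $X$ together with the fact that an optimal $\gamma^+$ for the larger problem can always be assumed to fix (or, in the other direction, need not fix) the new index $n+1$.
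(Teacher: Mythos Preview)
Your proposal is correct and takes essentially the same approach as the paper. Both directions are argued exactly as in the paper: the easy inequality by extending an optimal matching for $(a,c)$ to one that sends $x\mapsto y$, and the harder one by taking an optimal matching for $(a+e_x,c+e_y)$, disposing of the trivial case where $x$ is already paired with $y$, and otherwise splicing the two edges through $x$ and $y$ into a single edge $a_\ell\mapsto c_k$ via the triangle inequality; the paper merely uses a different indexing convention (placing $x$ at position $1$ rather than $n+1$) and is terser about the bijection bookkeeping that you spell out.
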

\begin{proof}
$d_{E}(a+e_{x},c+e_{y})\leq d(x,y)+d_{E}(a,c)$ because its right
side is obtained from its left side by permuting the subscripts in
the sense of the definition of $d_{E}$. 

Now, renumbering so as to identify $x$ as $a_{1}$ and $y$ as $c_{n+1}$
(if these subscripts were the same we would be finished) suppose that
\[
d_{E}(a+e_{x},c+e_{y})=d(x,c_{1})+d(a_{n+1},y)+\sum_{j=2}^{n}d(a_{j},c_{j})
\]
Then 
\[
d_{E}(a+e_{x},c+e_{y})+d(x,y)\geq d(a_{n+1},c_{1})+\sum_{j=2}^{n}d(a_{j},c_{j})\geq d_{E}(a,c)
\]

as required. Simple examples show that the bound $d(x,y)$ is tight.
\end{proof}
Finally we compare sequences in $d_{E}$ and $d_{Em}$.
\begin{prop}
\label{pro: sequence}Let $S_{i}$ be a sequence in $E$. Then any
of the following is true with respect to $d_{E}$ iff it is true with
respect to $d_{Em}$: (i) $S_{i}$ is Cauchy; (ii) $S_{i}$ is convergent;
(iii) $S_{i}$ has limit $l\in E$.\end{prop}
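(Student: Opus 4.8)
The whole proof turns on the relation $d_{Em}(a,c)=d_E(a,c)/\max(C(a),C(c))$ together with the crude bound $d_E(a,c)\ge M\,|C(c)-C(a)|$ coming from the notional part. First I would dispose of the ``$d_E\Rightarrow d_{Em}$'' halves of all three assertions at once: since $\max(C(a),C(c))\ge 1$ for every pair $(a,c)\ne(e_0,e_0)$, we have $0\le d_{Em}\le d_E$ everywhere, so a $d_E$-Cauchy sequence is $d_{Em}$-Cauchy and $d_E(S_i,l)\to 0$ forces $d_{Em}(S_i,l)\to 0$; since (ii) is just (iii) with $l$ existentially quantified, it transfers too.

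The substance is the converse, and the plan is to isolate the single structural fact that makes it work: \emph{along a $d_{Em}$-Cauchy sequence $C(S_i)$ is eventually constant, and along a sequence with $d_{Em}$-limit $l$ it is eventually equal to $C(l)$.} Dividing the bound above by $\max(C(a),C(c))$ gives, for $C(a)\le C(c)$, both $d_{Em}(a,c)\ge M\bigl(1-C(a)/C(c)\bigr)$ and $d_{Em}(a,c)\ge M\,|C(c)-C(a)|/\max(C(a),C(c))$. The first inequality says that a small value of $d_{Em}(a,c)$ with $C(c)$ large forces $C(a)$ close to $C(c)$: from $d_{Em}<M/2$ one gets $\max<2\min$ for the two cardinalities, so a $d_{Em}$-Cauchy sequence has cardinalities bounded past some index, and $d_{Em}(S_i,l)\to 0$ with $l\ne e_0$ forces $C(S_i)<2C(l)$ for large $i$. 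The second inequality says that whenever $C(a)\ne C(c)$ the quantity $d_{Em}(a,c)$ is bounded below by a strictly positive constant depending only on the two cardinalities; hence two distinct cardinality values cannot both occur infinitely often far out along a $d_{Em}$-Cauchy sequence, and no cardinality $\ne C(l)$ can occur infinitely often along a $d_{Em}$-convergent one. Boundedness together with ``at most one value infinitely often'' then yields the claimed eventual constancy. The degenerate cases ($S_i=e_0$ infinitely often, or $l=e_0$) I would peel off first, using that $d_{Em}(\cdot,e_0)$ takes only the values $0$ and $M$.

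Granting the structural fact, the conclusion is immediate: once $C(S_i)\equiv p$ for $i\ge N$ (resp. $C(S_i)=k:=C(l)$ for $i\ge N$), every relevant pair in the tail has cardinality-maximum equal to $p$ (resp. $k$), so on that tail $d_E=p\,d_{Em}$ (resp. $d_E(S_i,l)=k\,d_{Em}(S_i,l)$), and when $p=0$ the tail is the constant sequence $e_0$; hence $d_{Em}$-Cauchy implies $d_E$-Cauchy, $d_{Em}$-limit $l$ implies $d_E$-limit $l$, and (ii) follows again.

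The one place I expect to need genuine care is the final bookkeeping in the structural fact: upgrading ``cardinalities bounded and only one value repeated infinitely often'' to ``cardinality eventually constant'' requires observing that each of the finitely many exceptional values is attained only finitely often. It is worth remarking that the argument uses neither boundedness nor completeness of $d$, nor even the triangle inequality for $d_E$ and $d_{Em}$, so the proposition holds for arbitrary $(X,d)$ and every $M>0$.
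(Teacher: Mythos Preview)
Your argument is correct and follows essentially the same approach as the paper: isolate the structural fact that $C(S_i)$ is eventually constant along a $d_{Em}$-Cauchy (or $d_{Em}$-convergent) sequence via the notional-part lower bound, and then use the resulting proportionality $d_E=p\,d_{Em}$ on the tail. Your treatment of the forward direction is slightly slicker than the paper's---you observe $d_{Em}\le d_E$ outright rather than first proving eventual constancy for $d_E$-Cauchy sequences---and your closing remark that nothing in the argument uses boundedness of $d$ or the triangle inequality is a worthwhile addition not made explicit in the paper.
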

\begin{proof}
We first show that $d_{E}$ and $d_{Em}$ have the same Cauchy sequences.
Since multisets of cardinalities $r$ and $t$ are at least $M|t-r|$
apart in $d_{E}$, it follows that any Cauchy sequence for $d_{E}$
must eventually have constant cardinality, in which case $d_{E}$
and $d_{Em}$ are mutually proportional and so the sequence is also
Cauchy for $d_{Em}$.

Now suppose $S_{i}$ is Cauchy for $d_{Em}$, write $s_{i}=C(S_{i})$
and then for each $\epsilon>0,\exists N=N(\epsilon)$ such that whenever
$i,j>N$, $d_{Em}(S_{i},S_{j})<\epsilon$. But then $d_{Em}(S_{i},S_{j})\geq M(1-\frac{s_{j}}{s_{i}})$
supposing $s_{i}\geq s_{j}$ and as $\frac{s_{j}}{s_{i}}>1-\frac{\epsilon}{M}$
, no subsequence of the $s_{i}$ can go to infinity, and hence the
sequence $s_{i}$ is bounded (for each $s_{j}$, and hence in general).
But then $M(1-\frac{s_{j}}{s_{i}})$ only takes finitely many positive
values so for sufficiently small $\epsilon$ this gives a contradiction
unless the $s_{i}$ are eventually constant. So $d_{E}$ and $d_{Em}$
are again proportional and $S_{i}$ is Cauchy with respect to $d_{E}$.
(There is also a trivial case in which $s_{i}=0$ infinitely often.)

An exactly similar argument shows that any limit of such a sequence
(either metric) again has the same cardinality. It follows that $d_{E}$
and $d_{Em}$ also have the same convergent sequences (and limits).
\end{proof}
We are now ready for the main result.
\begin{prop}
(Topology and completeness.)

$d_{E}$ and $d_{Em}$ induce the same topology on $E$. The metrics
$d_{E}$ and $d_{Em}$ are complete iff $d$ is.\end{prop}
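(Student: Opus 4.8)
The plan is to push everything back to the single metric $d_E$ using Proposition~\ref{pro: sequence}, and then to prove ``$d_E$ complete $\iff$ $d$ complete'' by two separate arguments. For the topology statement: a metric space is first countable, so two metrics on $E$ induce the same topology exactly when they have the same convergent sequences with the same limits, and this is precisely parts~(ii)--(iii) of Proposition~\ref{pro: sequence}. For completeness: part~(i) of that proposition gives $d_E$ and $d_{Em}$ the same Cauchy sequences and part~(ii) the same convergent ones, so $(E,d_E)$ is complete iff $(E,d_{Em})$ is, and it remains only to handle $d_E$.

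For the ``$\Leftarrow$'' direction I would start from a $d_E$-Cauchy sequence $(S_i)$. Since multisets of unequal cardinality are at least $M$ apart in $d_E$, the argument already used in Proposition~\ref{pro: sequence} shows $C(S_i)$ is eventually a constant $n$, so after discarding finitely many terms I may assume every $S_i$ has cardinality $n$; then the notional parts vanish and $d_E$ coincides on these multisets with the quotient metric obtained from the $\ell^1$-product metric $\rho$ on $X^n$ by the coordinate action of the symmetric group, so I must show that quotient is complete. I would pass to a subsequence with $\sum_i d_E(S_i,S_{i+1})<\infty$, parametrise $S_1$ as a point of $X^n$, and then, having parametrised $S_i$, choose the parametrisation $s^{(i+1)}_1,\dots,s^{(i+1)}_n$ of $S_{i+1}$ that realises the optimal matching against the chosen parametrisation of $S_i$, so that $\sum_{k=1}^n d(s^{(i)}_k,s^{(i+1)}_k)=d_E(S_i,S_{i+1})$. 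Each coordinate sequence $(s^{(i)}_k)_i$ is then $d$-Cauchy, hence converges, say to $t_k$, and for $T=\sum_{k=1}^n e_{t_k}$ one gets $d_E(S_i,T)\le\sum_{k=1}^n d(s^{(i)}_k,t_k)\to 0$, so the subsequence, and therefore $(S_i)$, converges. (Alternatively one can induct on $n$, peeling off a coordinate at each step and using Lemma~\ref{squeeze} to glue its limit to the cardinality-$(n-1)$ limit obtained inductively.)

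For the ``$\Rightarrow$'' direction I would observe that $x\mapsto e_x$ is an isometry of $(X,d)$ onto the set $E_1$ of cardinality-one multisets, since $d_E(e_x,e_y)=d(x,y)$, and that every multiset of cardinality $\ne 1$ lies at $d_E$-distance at least $M$ from all of $E_1$, so $E_1$ is clopen, hence closed, in $(E,d_E)$; a closed subspace of a complete metric space is complete, so $(X,d)$ must be complete. Combined with the reduction above, this settles both metrics.

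The hard part will be the lifting step in the second paragraph: the optimal matching between $S_i$ and $S_{i+1}$ need not be the same for all $i$, so one cannot fix a single parametrisation once and for all and simply read off coordinatewise Cauchy sequences. Choosing the parametrisations coherently along a rapidly convergent subsequence is what absorbs the varying matchings into the choice of representatives and reduces completeness of $(E,d_E)$ to coordinatewise completeness of $(X,d)$; this coherent-lifting estimate is essentially what Lemma~\ref{squeeze} was prepared for.
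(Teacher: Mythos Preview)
Your argument is correct, and the reduction via Proposition~\ref{pro: sequence} and the isometry $x\mapsto e_x$ for the ``only if'' direction match the paper exactly.  For the ``if'' direction, however, your main route is genuinely different from the paper's.  You pass to a subsequence with $\sum_i d_E(S_i,S_{i+1})<\infty$ and lift coherently to $X^n$ by choosing at each step the parametrisation of $S_{i+1}$ that realises the optimal matching with the already-chosen parametrisation of $S_i$; this makes each coordinate sequence $d$-Cauchy outright, and no compactness is needed.  The paper instead first argues that a $d_E$-Cauchy sequence lives in a totally bounded region of $X$, replaces $X$ by the compact closure of that region, then inducts on $n$: it picks an \emph{arbitrary} element $x_i\in S_i$, uses compactness to extract a convergent subsequence, and invokes Lemma~\ref{squeeze} twice to strip off that coordinate and to reattach its limit.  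Your approach is shorter and avoids the compactness detour entirely; the paper's approach is what actually uses Lemma~\ref{squeeze}, so your closing remark that ``this coherent-lifting estimate is essentially what Lemma~\ref{squeeze} was prepared for'' is a slight misattribution---your main argument never calls on that lemma, only your parenthetical alternative does.

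One small caveat about that parenthetical alternative: ``peeling off a coordinate'' presupposes you can find a coordinate that converges, and without your summable-subsequence trick or the paper's compactness reduction it is not clear how you would do that.  Your primary argument stands on its own, so this does not affect correctness.
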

\begin{proof}
By \eqref{pro: sequence}, $d_{E}$ and $d_{Em}$ have the same convergent
sequences (and limits), and so induce the same topology on $E$. We
also see that given $d$, either both or neither of $d_{E}$ and $d_{Em}$
are complete metrics.

If $x_{i}$ is a non-convergent Cauchy sequence in $X$, then $S_{i}=\{x_{i}\}$
is a non-convergent Cauchy sequence for both $d_{E}$ and $d_{Em}$.

Supposing that $d$ is complete, let $S_{i}$ be a sequence of multisets
of $X$ which is Cauchy in $d_{E}$, with all $C(S_{i})=n>1$ (the
completeness of $d$ implies the case $n=1$). Given $\epsilon>0$,
$\exists N=N(\epsilon)$ such that $m\geq N\Longrightarrow d_{E}(S_{m},S_{N})<\epsilon$.
As every element%
\footnote{As always, this is with multiplicity. If some element of $X$ occurs
three times in $S_{N}$, then at least three elements (with multiplicity)
of each $S_{m}$ are within $d$-distance $\epsilon$ of it.%
} of each $S_{m}$ for $m\geq N$ is then within $d$-distance $\epsilon$
of some element of $S_{N}$ it follows that there exists a totally
bounded region of $X$ containing all elements of all the $S_{i}$.
Since $X$ is complete, the completion of this region is (can be regarded
as) a compact subset of $X$ and now we can assume that $X$ is compact.

Recalling that a Cauchy sequence converges iff it has a convergent
subsequence, we select an arbitrary $x_{i}$ from each $S_{i}$ (using
the axiom of choice). Since $X$ is compact, the sequence $x_{i}$
has a convergent subsequence $y_{i}=x_{t(i)}$ with limit $y$ (say).
Writing $T_{i}$ for $S_{t(i)}$, we denote by $T_{i}'$ the multiset
$T_{i}-e_{y_{_{i}}}$. Using \eqref{squeeze} we have

\[
|d_{E}(T_{i},T_{j})-d_{E}(T_{i}',T_{j}')|\leq d(y_{i},y_{j})
\]

and it follows that $T_{i}'$ is a Cauchy sequence of cardinality
$n-1$, and we can assume that $T_{i}'$ has limit $T'$. Using \eqref{squeeze}
again, and denoting $T'+e_{y}$ by $T$, 
\[
|d_{E}(T_{i},T)-d_{E}(T_{i}',T')|\leq d(y_{i},y)
\]

and so $T_{i}$ converges to $T$, which is therefore the limit of
the Cauchy sequence $S_{i}$.
\end{proof}

\subsection{An algorithm for $d_{E}$}

We show that calculation of $d_{E}$ is an integer programming problem.
As usual suppose $C(a)\leq C(c)$ and $a\cap c=e_{0}$. Just as in
the Hausdorff heuristic, label the rows (the columns) of a matrix
by elements of $R(a)$ (of $R(c)$), and put the $d$-distances as
entries. Add one more row whose entries are all $M$, to give a matrix
$D$.

Define a new matrix $H$, the same shape as $D$, constrained to satisfy
\[
\sum_{i}h_{ij}=c(j)\;\textrm{for}\; j\leq\#R(c)\text{\qquad\textrm{and}\qquad\ensuremath{\sum_{j}h_{ij}=a(i)\;\textrm{for}\; i\leq\#R(a)}}
\]
implying $\sum_{j}h_{1+\#R(a),j}=C(c)-C(a)$. Then $d_{E}(a,c)$ is
the minimum value of $\sum_{i,j}d_{ij}h_{ij}$ (the trace of $D^{T}H$),
for which all the $h_{ij}\in\mathbb{N}$.

\section{The multiset model $F$}

We will define a space $A$ whose finite subsets include the multisets
of $X$. 

This time we identify the multiset $re_{x}$ with $(x,r)\in X\times\mathbb{N}$,
as usual interpreted as {}``$r$ copies of $x$''. Let $A$ be the
quotient space of $X\times\mathbb{N}$ in which all points of the
form $(x,0)$ have been identified. $A_{r}$ will denote the (quotient
of the) subset $X\times\{r\}$. We use $\mathbb{N}$ instead of $\mathbb{Z}^{+}$
(which would be simpler) to get a canonical bijection with model $E$.
Note that $A$ consists of the isolated point $e_{0}$ and isolated
copies of $X$; furthermore $A$ coincides with $\{e\in E:\#R(e)\leq1\}$.

Hence a multiset of $X$ is a finite subset $U$ of $A$ whose underlying
elements of $X$ are all distinct, viz. $re_{x},se_{x}\in U\Longrightarrow r=s$
and $F$ will denote the space of all such subsets of $A$. The following
result should now be obvious.
\begin{prop}
Let $d'$ be any metric on $A$. Then the restriction of $d'_{H}$
to $F$ is a multiset metric on $X$, and it generalises the Hausdorff
metric iff $d'(1e_{x},1e_{y})=d(x,y)\forall x,y\in X$.
\end{prop}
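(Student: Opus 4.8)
The plan is to reduce the statement to two entirely standard facts about the Hausdorff construction: finite subsets of any metric space are compact, and the value of $d'_{H}$ on two sets depends on $d'$ only through its values on the points occurring in those sets. For the first assertion I would make precise that, under the canonical bijection of $F$ with $E$, every element of $F$ is literally a nonempty compact subset of $(A,d')$: a nonempty multiset $e$ corresponds to the finite set $\{e(s)e_{s}:s\in R(e)\}\subseteq A$, and the empty multiset $e_{0}$ to the singleton $\{e_{0}\}\subseteq A$. Since $d'$ is a metric and these sets are finite, all the extrema in the definition of $d'_{H}$ are attained, so $d'_{H}$ is a genuine metric on the nonempty compact subsets of $A$ (no boundedness or completeness of $d'$ is needed); restricting a metric to the subset $F$ leaves a metric, which is exactly a multiset metric on $X$.

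For the equivalence, I would note that a nonempty finite subset $B$ of $X$, regarded as a multiset, corresponds to $\beta(B):=\{1e_{x}:x\in B\}\subseteq A_{1}$, and that $\beta$ is a bijection from the finite subsets of $X$ onto the elements of $F$ supported in $A_{1}$; so the phrase ``$d'_{H}|_{F}$ generalises the Hausdorff metric'' means precisely that $d'_{H}(\beta(B),\beta(C))=d_{H}(B,C)$ for all such $B,C$. Sufficiency is immediate: if $d'(1e_{x},1e_{y})=d(x,y)$ for all $x,y$, then substituting $1e_{x},1e_{y}$ for $x,y$ everywhere in the defining max--min formula for $d_{H}$ leaves every term unchanged. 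For necessity I would specialise to singletons $B=\{x\}$, $C=\{y\}$, where the formula collapses to $d'_{H}(\{1e_{x}\},\{1e_{y}\})=d'(1e_{x},1e_{y})$ and $d_{H}(\{x\},\{y\})=d(x,y)$, so equality for all $x,y$ forces the stated condition.

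There is no real obstacle here — as the author says, the result is essentially obvious once the models are in place — but the one point needing care is the bookkeeping around the isolated point $e_{0}$: one must send the empty multiset to the nonempty set $\{e_{0}\}$ so that it lies in the domain of $d'_{H}$, and check that this, together with the requirement that distinct members of a set in $F$ have distinct underlying points of $X$, is consistent. Everything else is a direct appeal to the standard theory of $d_{H}$ already recalled in the introduction.
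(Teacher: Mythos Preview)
Your proposal is correct and is exactly the argument the paper has in mind: the paper offers no proof beyond the remark that the result ``should now be obvious'', and what you have written is precisely the obvious unpacking of the definitions (finite subsets are compact, the Hausdorff formula depends only on the pairwise $d'$-values, and singletons give the necessity direction). Your care about representing $e_{0}$ by the singleton $\{e_{0}\}$ so that it lies in the domain of $d'_{H}$ is a genuine bookkeeping point the paper glosses over.
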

We will return later to the question of when this is complete.

For metrics on $A$, as before fix $M>0$ and define $\theta=\frac{\sup d}{M}$
when $d$ is bounded. We start with the functions $d_{A}$ and $d_{Am}$
from $A\times A$ to $R$ defined by 
\[
d_{A}(re_{x},te_{z})=M|t-r|+\min(r,t)d(x,z)
\]

and 
\[
d_{Am}(re_{x},te_{z})=\frac{d_{A}(re_{x},te_{z})}{\max(r,t)}\quad\textrm{or\ensuremath{\quad0\quad}when\quad}r=t=0
\]

Noting that (a) these are well-defined on $A\times A$, (b) they are
the respective restrictions to $A\times A$ of $d_{E}$ and $d_{Em}$,
and (c) they both agree with $d$ when $r=t=1$, it follows that they
are metrics on $A$ when $\theta\leq2$ and when $\theta\leq1$ respectively.
Actually there is a small surprise.
\begin{prop}
If $d$ is unbounded then $d_{A}$ and $d_{Am}$ are non-metrics for
all $M$. If $d$ is bounded, then $d_{A}$ and $d_{Am}$ are both
metrics iff $\theta\leq2$. \end{prop}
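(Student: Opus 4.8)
The plan is to analyse the triangle inequality for $d_A$ and $d_{Am}$ directly on $A$, exploiting the fact that points of $A$ carry only a single underlying element of $X$, so the combinatorial minimisation over permutations that complicated the case of $d_E$ collapses. First I would dispose of the "only if" direction and the unboundedness claim exactly as in the proof of the first Proposition: taking $x\neq y$ in $X$ and computing $d_A(1e_x,1e_x)$-type degenerate triples is too weak here, so instead I would use triples like $1e_x, 2e_x, 1e_y$ or, more to the point, $re_x, se_y, te_z$ with the multiplicities chosen to isolate a term $2M-d(x,y)$ (or a multiple thereof); since $d_A$ restricts to $d$ on $A_1$, any failure of boundedness of $d$ immediately breaks the triangle inequality, and $\theta\le 2$ is forced. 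The genuinely new content is the "if" direction together with the assertion that for $d_{Am}$ the threshold is $\theta\le 2$ and not $\theta\le 1$: the division by $\max(r,t)$ turns out to be harmless here precisely because all three multisets involved are concentrated at single points.

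For the "if" direction I would fix $re_x, se_y, te_z \in A$ and, without loss of generality, order the multiplicities. The quantity $d_A(re_x,te_z)=M|t-r|+\min(r,t)\,d(x,z)$ splits into a "notional" piece $M|t-r|$, which behaves additively and poses no problem (it is just the $\ell_1$ metric on $\mathbb N$), and a "metric" piece $\min(r,t)\,d(x,z)$. The crux is to show
\[
M|t-r| + \min(r,t)\,d(x,z) \;\le\; \bigl(M|s-r| + \min(r,s)\,d(x,y)\bigr) + \bigl(M|t-s| + \min(s,t)\,d(y,z)\bigr).
\]
Using $|t-r|\le |s-r|+|t-s|$ it suffices to control the metric pieces against whatever slack the notional pieces leave, i.e.\ to show
\[
\min(r,t)\,d(x,z) \;\le\; \min(r,s)\,d(x,y) + \min(s,t)\,d(y,z) + M\bigl(|s-r|+|t-s|-|t-r|\bigr).
\]
When $s$ lies between $r$ and $t$ the last bracket vanishes and one needs $\min(r,t)\le\min(\min(r,s),\min(s,t))=\min(r,s,t)$ together with $d(x,z)\le d(x,y)+d(y,z)$ — but $\min(r,t)$ need not be $\le\min(r,s)$, so here is where the $\theta\le 2$ hypothesis enters: the excess, which is at most $(\min(r,t)-\min(r,s,t))\cdot\sup d \le (\min(r,t)-\min(r,s,t))\theta M$, has to be absorbed by the notional slack $2M\min(\text{distance of }s\text{ from the interval }[r,t]\text{-endpoints})$, and a short case check ($s<\min(r,t)$ versus $s>\max(r,t)$ versus $s$ between) shows $\theta\le 2$ is exactly what makes every case go through, mirroring the $(2-\theta)$ factors that appeared for $d_E$.

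For $d_{Am}$ I would reduce to $d_A$ by the convexity trick already used in the first Proposition: write $d_{Am}(re_x,te_z)=d_A(re_x,te_z)/\max(r,t)$ and observe that, because $d_{Am}$ is bounded above by $M\theta/1$ is not quite the right bound — rather $d_{Am}\le M$ fails when $\theta>1$ — so the earlier convex-combination argument does not transfer verbatim, and this is the step I expect to be the main obstacle. The resolution I would aim for: multiply through by the three denominators and show the resulting polynomial inequality in $r,s,t$ and the $d$-values holds whenever $\theta\le 2$, again by splitting off the notional parts (which now contribute terms like $M(1-\min(r,s)/\max(r,s))$) and bounding the metric parts using $\theta\le 2$; the point is that the worst configurations are the same degenerate ones ($\{r,s,t\}\subseteq\{0,1,2\}$ with two of them equal) that already showed $\theta\le 2$ is necessary, so the bound is tight and the inequality is in fact sharp. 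I would close by remarking that the "surprise" — that $d_{Am}$ tolerates $\theta$ up to $2$ on $A$ though only up to $1$ on all of $E$ — is explained by the absence of any genuine matching problem inside $A$, there being at most one point of $X$ in play per element.
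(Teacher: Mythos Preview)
Your plan has the right shape but misses the paper's two main shortcuts and leaves the crucial $d_{Am}$ step unfinished.

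First, the paper does not redo the triangle inequality for $d_A$ at all: since $d_A$ is the restriction of $d_E$ to $A\subset E$, it is automatically a metric whenever $\theta\le2$ by the earlier Proposition. Your direct case analysis would work, but it also contains a slip: when $r\le s\le t$ you have $\min(r,t)=r=\min(r,s)$ and $\min(s,t)=s\ge r$, so the reduced inequality follows from the triangle inequality for $d$ alone and no bound on $\theta$ is needed there; only the case $s<\min(r,t)$ actually uses $\theta\le2$. On necessity, your suggested triple $1e_x,2e_x,1e_y$ does not violate any triangle inequality; the paper's $2e_x,e_x,2e_z$ works because the long side $d_A(2e_x,2e_z)=2d(x,z)$ carries no notional part, yielding the defect $2M-d(x,z)$. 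And the sentence ``since $d_A$ restricts to $d$ on $A_1$, any failure of boundedness of $d$ immediately breaks the triangle inequality'' is backwards: $d$ itself always satisfies the triangle inequality.

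Second, and more importantly, your treatment of $d_{Am}$ is only a hope (``multiply through by the three denominators and show the resulting polynomial inequality''). The paper's observation that makes this tractable is the one you are missing: with $r\le t$, if also $s\le t$ then $\max(r,s)\le t$ and $\max(s,t)=t$, so dividing the already-established $d_A$ triangle inequality through by $t$ gives the $d_{Am}$ inequality immediately. Only the case $s>t$ requires computation, and there multiplying merely by $st$ (not three denominators) yields
\[
M(t+r)(s-t)+rt\,d(x,y)+t^{2}d(y,z)-rs\,d(x,z),
\]
which after $t^{2}\ge rt$ and $d(x,y)+d(y,z)\ge d(x,z)$ reduces to $(s-t)\bigl(M(t+r)-r\,d(x,z)\bigr)\ge0$, equivalent to $2M\ge\frac{2r}{r+t}d(x,z)$; since $\frac{2r}{r+t}\le1$ this holds whenever $\theta\le2$. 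That is the whole ``surprise'': the single-point structure of $A$ forces the coefficient in front of $d(x,z)$ below $1$, so the division by $\max$ costs nothing in the only case where it could.
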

\begin{proof}
As $d_{A}(2e_{x},e_{x})+d_{A}(e_{x},2e_{z})-d_{A}(2e_{x},2e_{z})=2M-d(x,z)$,
if $d_{A}$ is a metric, $d$ must be bounded and $\theta\leq2$.
Use the same example for $d_{Am}.$

It only remains to show that $d_{Am}$ is a metric when $\theta\leq2$.
We fix $re_{x},te_{z}\in A$, assuming $r\leq t$. Now if $s\leq t$,
it is immediate that

\[
d_{Am}(re_{x},se_{y})+d_{Am}(se_{y},te_{z})\geq d_{Am}(re_{x},te_{z})
\]

so we will take $s>t$. Using the definition of $d_{Am}$, 
\begin{equation}
st(d_{Am}(re_{x},se_{y})+d_{Am}(se_{y},te_{z})-d_{Am}(re_{x},te_{z}))\label{eq:D}
\end{equation}

\[
=M(t+r)(s-t)+rtd(x,y)+t^{2}d(y,z)-rsd(x,z)
\]

and using $t^{2}\geq rt$ we get that \eqref{eq:D} is at least as
large as

\[
Mt(s-t)+r(s-t)(M-d(x,z))
\]

which is non-negative provided $2M\geq\frac{2r}{r+t}d(x,z)$, whose
right side cannot exceed $\sup d=\theta M$. So $d_{Am}$ is a metric
when $\theta\leq2$.\end{proof}
\begin{rem}
If $r\leq t$, $M(t-r)\leq td_{Am}(re_{x},te_{z})=d_{A}(re_{x},te_{z})\leq tM\max(1,\theta)$.
Actually, $d_{Am}(re_{x},te_{z})$ is a convex combination of $d(x,z)$
and $M$ and therefore lies between them. \end{rem}
\begin{prop}
Let $r_{i}e_{x(i)}$ be a sequence in $A$. Then any of the following
is true with respect to $d_{A}$ iff it is true with respect to $d_{Am}$:
(i) $r_{i}e_{x(i)}$ is Cauchy; (ii) $r_{i}e_{x(i)}$ is convergent;
(iii) $r_{i}e_{x(i)}$ has limit $l\in A$.
\end{prop}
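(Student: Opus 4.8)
The plan is to follow the pattern of Proposition~\ref{pro: sequence}, the model-$E$ analogue, exploiting that $d_A$ and $d_{Am}$ are mutually proportional on any set of elements of $A$ sharing a common multiplicity. One could alternatively deduce (i) directly from Proposition~\ref{pro: sequence}, since a sequence in $A$ is a sequence in $E$ and $d_A,d_{Am}$ are the restrictions of $d_E,d_{Em}$ to $A\times A$; for (ii) and (iii) one would additionally need that $A$ is closed in $(E,d_E)$, which is essentially the computation below.

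First I would check that $d_A$ and $d_{Am}$ have the same Cauchy sequences. Since $d_A(re_x,te_z)\ge M|t-r|$, a $d_A$-Cauchy sequence $r_ie_{x(i)}$ has eventually constant multiplicity $r$; on the corresponding tail $d_A=r\,d_{Am}$ (both vanish if $r=0$), so the sequence is $d_{Am}$-Cauchy as well. Conversely, suppose $r_ie_{x(i)}$ is $d_{Am}$-Cauchy. From $d_{Am}(re_x,te_z)\ge M|t-r|/\max(r,t)=M\bigl(1-\min(r,t)/\max(r,t)\bigr)$, for each $\epsilon<M$ and all large $i,j$ we get $\min(r_i,r_j)/\max(r_i,r_j)>1-\epsilon/M$; comparing with a fixed term bounds the $r_i$, so they assume only finitely many values, whence the positive values of $M|r_i-r_j|/\max(r_i,r_j)$ are bounded away from $0$ and for small $\epsilon$ the $r_i$ are again eventually constant (the case $r_i=0$ infinitely often being trivial). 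Proportionality on the tail then yields a $d_A$-Cauchy sequence.

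Next I would treat limits. If $r_ie_{x(i)}\to le_w$ in either metric, the same multiplicity-stabilisation argument applies: when $l\ge1$ use $d_A(r_ie_{x(i)},le_w)\ge M|r_i-l|$ or the $d_{Am}$-estimate above, and when $l=0$ note that $d_A(r_ie_{x(i)},e_0)=Mr_i$ and $d_{Am}(r_ie_{x(i)},e_0)\in\{0,M\}$ both tend to $0$ only if $r_i=0$ eventually. Hence $r_i=l$ eventually, after which $d_A=l\,d_{Am}$ on the tail makes ``limit $le_w$'' equivalent for the two metrics, proving (iii). Since a sequence converges precisely when it has some limit in $A$, (ii) then follows, and (i) was the first step.

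The only point needing care is, exactly as in Proposition~\ref{pro: sequence}, the implication that a $d_{Am}$-Cauchy sequence has eventually constant multiplicity: one must rule out an unbounded subsequence of multiplicities and then combine the finiteness of the possible positive values of $M|r_i-r_j|/\max(r_i,r_j)$ with the Cauchy condition. Everything else is routine, using the preceding Remark that $d_{Am}(re_x,te_z)$ always lies between $d(x,z)$ and $M$.
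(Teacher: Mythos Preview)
Your proposal is correct and follows essentially the same approach as the paper, which simply states that the proof is exactly as in Proposition~\ref{pro: sequence}. You have just written out explicitly the eventual-constancy-of-multiplicity argument and the resulting proportionality $d_A=l\,d_{Am}$ on the tail, together with the trivial $l=0$ case, exactly as intended.
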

The proof is exactly as in \eqref{pro: sequence}.
\begin{prop}
(Main properties of $A$)\end{prop}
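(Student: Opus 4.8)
The plan is to prove three things, in this order: that $d_A$ and $d_{Am}$ induce the same topology on $A$; that this common topology makes $A$ the disjoint union of the isolated point $e_0$ with countably many clopen copies of $(X,d)$ (the copy $A_r$ being $X$ rescaled by the factor $r$); and that $d_A$ and $d_{Am}$ are complete metrics precisely when $d$ is complete. The two preceding propositions do most of the work for us: the metric criterion $\theta\le 2$, and --- crucially --- the fact that $d_A$ and $d_{Am}$ have exactly the same Cauchy sequences, convergent sequences, and limits.

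First I would dispose of the topology claim: since $d_A$ and $d_{Am}$ possess the same convergent sequences with the same limits, and since closed sets in a metric space are exactly the sequentially closed ones, the two metrics have the same closed sets and hence induce the same topology. For the structural description, note that the definition gives $d_A(re_x,re_z)=r\,d(x,z)$, so for each $r\ge 1$ the bijection $x\mapsto re_x$ of $X$ onto $A_r$ is bi-Lipschitz, hence a homeomorphism onto $A_r$ with its subspace topology; in particular $d_A$ restricted to $A_1$ is an isometric copy of $d$. Moreover $d_A(re_x,te_z)\ge M|t-r|\ge M$ whenever $r\ne t$, and $d_A(e_0,te_z)=Mt\ge M$ for $t\ge 1$, so each $A_r$ and the singleton $\{e_0\}$ is clopen in $(A,d_A)$, and therefore also in $(A,d_{Am})$. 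Thus $A=\{e_0\}\sqcup A_1\sqcup A_2\sqcup\cdots$ as topological spaces. (Two side remarks I would probably include: $A$ is never compact, having an infinite partition into nonempty clopen sets; and although $d_{Am}\le M\max(1,\theta)$, the metric $d_A$ is unbounded.)

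For completeness, if $d$ is not complete I would take a non-convergent $d$-Cauchy sequence $x_i$ in $X$; since $x\mapsto 1e_x$ embeds $X$ isometrically into $(A_1,d_A)$, the sequence $1e_{x_i}$ is $d_A$-Cauchy and non-convergent in $A$ (any limit would lie in the clopen piece $A_1$ and pull back to a limit of $x_i$). Conversely, assume $d$ complete and let $(u_i)$ be $d_A$-Cauchy. Because distinct pieces are at $d_A$-distance $\ge M$, there is an $N$ with all $u_i$, $i\ge N$, in a single piece; if that piece is $\{e_0\}$ the tail is constant, and otherwise it is some $A_r$, so writing $u_i=re_{x_i}$ the sequence $(x_i)_{i\ge N}$ is $d$-Cauchy --- as $d=\tfrac1r d_A$ on $A_r$ --- hence converges to some $x\in X$, whence $u_i\to re_x$. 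Finally, by the preceding proposition the completeness of $d_A$ transfers to $d_{Am}$, which finishes the argument.

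The content has essentially been pre-processed by the two earlier propositions, so I do not expect a genuinely hard step. The one place that needs a little care is the claim that a Cauchy sequence is eventually confined to one piece: this uses the uniform lower bound $M$ on distances between distinct pieces, which holds for $d_A$ but \emph{fails} for $d_{Am}$ (consecutive pieces $A_r$ and $A_{r+1}$ come within $M/(r+1)$ of one another). That is exactly why the completeness argument should be carried out in $d_A$ and then exported to $d_{Am}$ via the preceding proposition, rather than attempted for $d_{Am}$ directly.
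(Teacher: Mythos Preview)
Your treatment of the first two clauses is correct and follows essentially the same route as the paper: same convergent sequences give the same topology, and the decomposition into clopen pieces (each bi-Lipschitz equivalent to $X$) handles both the topological description and the completeness transfer. Your remark that the uniform gap $M$ between distinct pieces holds for $d_A$ but not for $d_{Am}$ is a useful clarification that the paper leaves implicit. One small point: the paper's Clause~1 also asserts that this common topology coincides with the \emph{quotient topology} inherited from $X\times\mathbb{N}$; your structural description of $A$ as $\{e_0\}\sqcup A_1\sqcup A_2\sqcup\cdots$ with each $A_r$ homeomorphic to $X$ essentially delivers this, but you should say so explicitly.

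The genuine gap is that you have entirely omitted Clause~3 of the proposition: the characterisation of compact subsets $U\subseteq A$ as precisely those for which each $U_r=U\cap A_r$ is compact in $A_r$ and almost all $U_r$ are empty. This is not hard --- it follows readily from the clopen decomposition you already have, via sequential compactness (a sequence meeting infinitely many pieces, or an infinite sequence in a non-compact $U_r$, has no convergent subsequence; conversely a finite union of compacts is compact) --- but it is part of the statement and must be addressed. Your side remark that $A$ itself is never compact is correct but does not substitute for this clause.
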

\begin{enumerate}
\item $d_{Am}$ and $d_{A}$ both induce the same topology on $A$, coinciding
with the quotient topology inherited from $X\times\mathbb{N}$.
\item $d_{A}$ and $d_{Am}$ are complete metrics iff $d$ is.
\item The subset $U$ of $A$ is compact iff each $U_{r}=U\cap A_{r}$ is
a compact subset of $A_{r}$, and almost all the $U_{r}$ are empty.\end{enumerate}
\begin{proof}
(Clause 1) We have just seen that $d_{A}$ and $d_{Am}$ have the
same convergent sequences and limits, so they induce the same topology.

Let $re_{x},te_{z}\in A$ with $t>0$ and choose $\epsilon>0$. Now
$M|r-t|\leq d_{A}(re_{x},te_{z})<\epsilon$ implies $r=t$ when $\epsilon$
is sufficiently small, and indeed in this case
\[
d_{A}(te_{x},te_{z})<\epsilon\Leftrightarrow d(x,z)<\frac{\epsilon}{t}
\]

It follows that any sufficiently small open ball around $te_{z}$
in the $d_{A}$-topology is also an open ball in the quotient topology,
and vice versa. 

The point $e_{0}$ is isolated in both topologies. So these three
topologies on $A$ coincide.

(Clause 2) By the preceding proposition $d_{A}$ is complete iff $d_{Am}$
is. As any Cauchy sequence eventually lies in a single $A_{r}=X\times\{r\}$,
it converges iff this is true for the same sequence regarded as a
sequence in $X$, and any limits also coincide.

(Clause 3) Suppose $U$ is compact. If infinitely many $U_{r}$ were
non-empty we could find a sequence in $U$ with no convergent subsequence
(compactness being equivalent to sequential compactness in metric
spaces). If $re_{x(i)}$ is a sequence in some $U_{r}$ then it has
a convergent subsequence in $U$ but this must converge to a point
of $U_{r}$. Conversely, if $U_{r}$ is compact in $A_{r}$ then it
is compact in $X$ and then $U$ is a finite union of compact sets,
and so compact.
\end{proof}
Let $d_{F}$ and $d_{Fm}$ be the Hausdorff metrics arising from $d_{A}$
and $d_{Am}$ respectively. Let us write $F'$ for the set of all
finite subsets of $A$.
\begin{prop}
$d_{F}$ and $d_{Fm}$ are metrics on $F'$ iff $\theta\leq2$, and
both coincide with the Hausdorff metric for the case of ordinary subsets.
They are complete metrics on $F'$ iff $d$ is.\end{prop}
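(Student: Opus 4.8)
The plan is to settle the three assertions using what is already known about $d_A$ and $d_{Am}$ together with elementary properties of the Hausdorff construction. Since $d_F=(d_A)_H$ and $d_{Fm}=(d_{Am})_H$ and every finite subset of $A$ is compact, both functions are finite-valued on $F'$; symmetry and the triangle inequality for $d_F$ and $d_{Fm}$ are inherited directly from $d_A$ and $d_{Am}$, and $d_F(U,V)=0$ forces $U=V$ because finite sets are closed, so by the proposition that $d_A$ and $d_{Am}$ are metrics precisely when $\theta\le2$, the same holds for $d_F$ and $d_{Fm}$ on $F'$. If instead $\theta>2$, choose $x,z$ with $d(x,z)>2M$; then $d_F(\{2e_x\},\{e_x\})=M$, $d_F(\{e_x\},\{2e_z\})=M+d(x,z)$ and $d_F(\{2e_x\},\{2e_z\})=2d(x,z)$, so the triangle inequality already fails on singletons in $F'$. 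This is just the remark from the introduction that a function violating the triangle inequality produces a Hausdorff function that also violates it, applied to the isometric copy of $A$ inside $F'$ given by singletons; the same example handles $d_{Fm}$.

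That $d_F$ and $d_{Fm}$ coincide with the ordinary Hausdorff metric on ordinary subsets is a special case of the earlier proposition describing when $d'_H|_F$ generalises $d_H$: an ordinary subset $\{x_1,\dots,x_n\}$ of $X$ sits inside $F'$ as $\{1e_{x_1},\dots,1e_{x_n}\}\subseteq A_1$, and since $d_A(1e_x,1e_y)=d_{Am}(1e_x,1e_y)=d(x,y)$, restricting either $d_F$ or $d_{Fm}$ to such subsets reproduces $d_H$ verbatim.

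For completeness, one direction is immediate: if $d$ is not complete, a non-convergent Cauchy sequence $x_i$ in $X$ yields $\{1e_{x_i}\}$, which is $d_F$-Cauchy (the distances being $d(x_i,x_j)$) but has no limit in $F'$, since for large $i$ any limit would have to be a single point $1e_l\in A_1$ with $x_i\to l$; the same works for $d_{Fm}$. Conversely, assume $d$ complete, so that $(A,d_A)$ and $(A,d_{Am})$ are complete by the proposition on the main properties of $A$. Given a $d_F$-Cauchy sequence $U_i$ in $F'$, I would first use that the levels $A_r$ are clopen and pairwise at $d_A$-distance at least $M$ (with $e_0$ isolated) to see that the $U_i$ eventually occupy one and the same finite set of levels $r_1,\dots,r_k$; writing $U_i^{(l)}=U_i\cap A_{r_l}$, this gives $d_F(U_i,U_j)=\max_l d_F^{(r_l)}(U_i^{(l)},U_j^{(l)})$ where $d_F^{(r)}$ denotes the Hausdorff metric on $A_r=(X,rd)$, so each $(U_i^{(l)})_i$ is Cauchy in the Hausdorff metric over the complete space $A_{r_l}$ and hence converges to a compact set $V^{(l)}\subseteq A_{r_l}$, whence $U_i\to\bigcup_l V^{(l)}$ in $d_F$. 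For $d_{Fm}$ the reduction is the same, the only change being that $d_{Am}$ restricts to $d$ itself on each level.

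The step I expect to be the real obstacle is this last one: to land inside $F'$ one must show each $V^{(l)}$ is \emph{finite}, equivalently that the limiting compact set has no accumulation point. In model $E$ this came for free, because a Cauchy sequence there has eventually constant cardinality; the Hausdorff metric imposes no such bound on the $U_i^{(l)}$, so ruling out a limit point of them along the sequence is exactly the content that must be supplied, and is where I would concentrate the effort.
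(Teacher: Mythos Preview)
Your treatment of the first two assertions matches the paper's (which is even briefer): the metric property transfers up and down through the Hausdorff construction via singletons, and the coincidence with $d_H$ on ordinary subsets is immediate from $d_A|_{A_1}=d_{Am}|_{A_1}=d$.

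The obstacle you flag in the completeness direction is real, and it is fatal---not to your argument, but to the statement itself. The paper's proof simply invokes ``the stated properties of the Hausdorff metric'', meaning completeness of $d_H$ on the space of \emph{all} non-empty compact subsets; but $F'$ is only the space of \emph{finite} subsets of $A$, and these are not closed in the Hausdorff hyperspace unless $A$ is uniformly discrete. Concretely, take $X=[0,1]$ with the usual metric (complete, bounded, and $\theta\le 2$ for any $M\ge\tfrac12$) and set $U_n=\{1e_0\}\cup\{1e_{1/k}:1\le k\le n\}\subset A_1$. Since $U_n\subset U_m$ for $m>n$ and every point $1e_{1/k}$ of $U_m\setminus U_n$ lies within $1/k\le 1/(n+1)$ of $1e_0\in U_n$, the sequence $(U_n)$ is $d_F$-Cauchy; yet its unique Hausdorff limit is $\{1e_0\}\cup\{1e_{1/k}:k\ge 1\}$, compact but infinite, hence outside $F'$. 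So the effort you propose to concentrate on showing each $V^{(l)}$ finite cannot succeed: the completeness of $d_F$ (and likewise of $d_{Fm}$) on $F'$ fails whenever $X$ contains an accumulation point. The claim becomes correct if $F'$ is replaced by the full space of non-empty compact subsets of $A$, or if $d$ is assumed uniformly discrete.
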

\begin{proof}
$\theta\leq2$ is necessary for the triangle inequality for $d_{A}$
(and so for $d_{F}$) or for $d_{Am}$ (and so for $d_{Fm}$) to hold.
The rest of the statement is an immediate consequence of their definitions
and the stated properties of the Hausdorff metric.
\end{proof}

\section{The multiset model $G$}

We continue to suppose $\theta\leq2$. Of course the restrictions
of $d_{F}$ and $d_{Fm}$ to $F$ (multisets on $X$) need not be
complete. For instance, if $d(x_{i},x)$ is strictly decreasing to
zero and $y_{i}=x_{i+1}$, then the sequence $2e_{x_{i}}+3e_{y_{i}}$
is Cauchy in $d_{F}$ or $d_{Fm}$ but its limit is $\{2e_{x},3e_{x}\}\in F'\backslash F$.

We deal with this discrepancy in the following way. Observe that to
every $U\in F'$ there is a function $t_{U}:X\rightarrow\mathbb{N}$
defined by 
\[
t_{U}(x)=\sum_{ae_{x}\in U}a
\]
and indeed if $U\in F$, $t_{U}$ is its representative in model $E$.
Define an equivalence relation $\sim$ on $F'$ by decreeing $U\sim V$
iff $t_{U}=t_{V}$. For example, if $x\neq y$ one $\sim$-class is
$\{e_{x},2e_{x},3e_{x},2e_{y}\},\{e_{x},5e_{x},2e_{y}\},\{2e_{x},4e_{x},2e_{y}\},\{6e_{x},2e_{y}\}$.
Obviously every class is finite, contains exactly one element of $F$,
and is a singleton iff $t_{U}(x)\leq2\forall x$. 

We now write $G$ for $F'/\!\sim$ and $d_{G}$ for the quotient pseudometric
on $G$ corresponding to $d_{F}$. There are canonical bijections
among $G$, $F$ and $E$. We extend the notations $e_{0}$, $R()$
and $C()$ to $F'$ and $G$ in the obvious way. If $e\in G\backslash\{e_{0}\}$,
it follows that $d_{G}(e,e_{0})\geq M$ since $d_{A}(re_{x},e_{0})\geq M$
for all $re_{x}\in A,r\neq0$.

Now $d_{G}$ is definitely less than $d_{F}$ in general as

\[
d_{G}(3e_{x},3e_{y})\leq d_{F}(\{e_{x},2e_{x}\},\{e_{y},2e_{y}\})\leq2d(x,y)<3d(x,y)=d_{F}(3e_{x},3e_{y})
\]

The most important facts about $d_{G}$ are corollaries of the following
result.
\begin{prop}
\label{prop:dGdFdH}If $e,f\in G\backslash\{e_{0}\}$, then $d_{G}(e,f)\geq d_{H}(R(e),R(f))$.\end{prop}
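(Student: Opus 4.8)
The goal is to show that the quotient pseudometric $d_G$ on $G=F'/\!\sim$ dominates the Hausdorff distance between root sets. I would first unpack the definition of the quotient pseudometric: $d_G(e,f)$ is the infimum, over all chains $e = \alpha_0$, with intermediate identifications $p_i \sim q_i$ in $F'$, of a sum $d_F(e,U_1) + d_F(V_1,U_2) + \cdots + d_F(V_n,f)$ where $U_i \sim V_i$. Since $\sim$-equivalent elements of $F'$ have the same associated function $t_U:X\to\mathbb N$, and hence the same underlying set of elements of $X$ actually occurring --- that is, $U\sim V$ implies $R(t_U) = R(t_V)$, which is precisely the root set $R(U)$ in the extended notation --- I would establish the key lemma: if $U \sim V$ in $F'$, then $R(U)$ and $R(V)$, viewed as ordinary subsets of $X$, are \emph{equal}. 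So the ``jumps'' in the chain cost nothing in $d_H$-terms.

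Next I would reduce to a single link. For any $U \in F'$, its root set $R(U) \subseteq X$ is recovered from the underlying $X$-coordinates of the points $ae_x \in U$ with $a>0$. I want: $d_F(U,W) \geq d_H(R(U), R(W))$ for all $U,W \in F' \setminus\{e_0\}$. Recall $d_F$ is the Hausdorff metric on subsets of $A$ built from $d_A$, and $d_A(re_x, te_z) = M|t-r| + \min(r,t)d(x,z) \geq d(x,z)$ whenever both $r,t \geq 1$ (which holds for points actually in subsets of $A\setminus\{e_0\}$). Thus the ``projection'' $\pi: A\setminus\{e_0\} \to X$ sending $re_x \mapsto x$ is $1$-Lipschitz from $(A\setminus\{e_0\}, d_A)$ to $(X,d)$, and it carries $U$ onto $R(U)$, $W$ onto $R(W)$. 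A $1$-Lipschitz surjection of finite sets does not increase Hausdorff distance: each row-minimum $\min_{w\in W} d_A(u,w)$ is at least $\min_{x' \in R(W)} d(\pi(u), x')$, and these span all of $R(U)$ as $u$ ranges over $U$, so the largest row/column minimum only shrinks under $\pi$. This gives $d_F(U,W) \geq d_H(R(U),R(W))$ directly from the Hausdorff ``largest row/column minimum'' heuristic quoted in the introduction.

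Finally I would assemble the chain estimate. Given any chain realising (approximately) $d_G(e,f)$, apply the single-link bound to each term $d_F(\cdot,\cdot)$ to get a sum of $d_H$-distances between consecutive root sets; then use the lemma that $\sim$-links preserve root sets, so consecutive terms in the $d_H$-chain share an endpoint, and apply the ordinary triangle inequality for $d_H$ (valid here since $d_A$, being a genuine metric when $\theta \le 2$, makes $d_F$ a genuine metric, and the restriction to root sets is just $d_H$ on $X$) to telescope the chain down to $d_H(R(e),R(f))$. Taking the infimum over chains yields $d_G(e,f) \geq d_H(R(e),R(f))$.

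The main obstacle I anticipate is being careful about two bookkeeping points: (i) confirming that the $1$-Lipschitz projection argument correctly handles the asymmetry in the definition of $d_H$ (row minima versus column minima) and the fact that $\pi$ is many-to-one --- several multiplicities $ae_x$ for the same $x$ collapse to one point, which is harmless for the Hausdorff computation but should be stated cleanly; and (ii) making sure the telescoping over the $\sim$-chain is legitimate, i.e. that the intermediate root sets really do coincide across each $p_i \sim q_i$ identification so that the triangle inequality chains up without slack. Neither is deep, but both need to be spelled out precisely since the whole point of model $G$ is that the quotient can only decrease distances, and we are asserting a floor below which it cannot drop.
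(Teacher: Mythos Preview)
Your argument is correct and is genuinely different from the paper's. The paper fixes a witness pair $x\in R(e)$, $y\in R(f)$ with $d(x,y)=d_H(R(e),R(f))$ and then, given an arbitrary chain $e=p_0,p_1,\ldots,p_{n+1}=f$, tracks $x$ forward through the successive root sets $R(p_j)$ using the one-sided bound $d_F(u,v)\geq\min_{b\in R(v)}d(a,b)$ for $a\notin R(v)$, eventually reaching some $z\in R(f)$ with accumulated length at least $d(x,z)\geq d(x,y)$. Your route is more structural: you observe that the projection $\pi:A\setminus\{e_0\}\to X$, $re_x\mapsto x$, is $1$-Lipschitz for $d_A$ (indeed $M|t-r|+\min(r,t)d(x,z)\geq d(x,z)$ once $r,t\geq1$), deduce the link estimate $d_F(U,W)\geq d_H(R(U),R(W))$ from functoriality of the Hausdorff construction under Lipschitz maps, note that $\sim$ preserves root sets, and telescope. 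This is cleaner and makes the reason for the inequality transparent; the paper's argument is more hands-on but avoids invoking the general Lipschitz-functoriality fact.

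One gap to patch: your link estimate $d_F(U,W)\geq d_H(R(U),R(W))$ is stated only for $U,W\in F'\setminus\{e_0\}$, but the chains defining $d_G(e,f)$ may pass through the class of $e_0$, where the root set is empty and $d_H$ is undefined. The paper handles this case separately: if some $p_i$ lies in the $e_0$ class then two consecutive $d_F$-terms are each at least $M$, so the chain length is at least $2M\geq\sup d\geq d_H(R(e),R(f))$ (using $\theta\leq2$). You should insert this observation before telescoping. A related technicality is that a representative $U\in F'$ might contain the point $e_0\in A$ together with other points; your $1$-Lipschitz bound on $\pi$ does not directly cover that point, so either note that such $U$ are $\sim$-equivalent to $U\setminus\{e_0\}$ and the infimum defining $d_G$ is unaffected, or check directly that the presence of $e_0$ in $U$ cannot spoil $d_F(U,W)\geq d_H(R(U),R(W))$.
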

\begin{proof}
Suppose $x\in R(e),y\in R(f)$ are such that $d(x,y)=d_{H}(R(e),R(f))$.
We can assume $x\notin R(f)$. Let $e=p_{0},p_{1},\ldots,p_{n},p_{n+1}=f$
be a sequence of elements of $G$, referring to the notation of \eqref{quotient}.
If any $p_{i}$ is $e_{0}$ then we have two or more terms $\geq M$
so the path length is at least $2M\geq\sup d\geq d(x,y)$ and we now
assume that all $R(j):=R(p_{j})$ are non-empty.

We will employ the observation that $d_{F}(u,v)\geq\min_{b\in R(v)}d(a,b)$
if $a\notin R(v)$. For any sequence $x_{0},x_{1},\ldots$, all in
$\cup_{j}R_{j}$, define $s_{i}$ by $x_{i}\in R(s_{i})$ where $s_{i}$
is maximal. Take $x_{0}=x$ and choose $x_{1}\in R(1+s_{0})$ such
that $d(x_{0},x_{1})$ is minimal. So the $d_{F}$-distance between
any member of $p_{s_{0}}$ and any member of $p_{1+s_{0}}$ is at
least $d(x_{0},x_{1})$.

If $x_{1}\in R(f)$ we are finished as our path is at least $d(x_{0},x_{1})\geq d(x,y)$.
Otherwise choose $x_{2}\in R(1+s_{1})$ such that $d(x_{1},x_{2})$
is minimal.

Again we are finished if $x_{2}\in R(f)$ as our path is (at least)
$d(x_{0},x_{1})+d(x_{1},x_{2})$. If not, choose $x_{3}\in R(1+s_{2})$
to minimise $d(x_{2},x_{3})$. As the $s_{i}$ are increasing we get
a sequence of terms from $x$ to some $z\in R(f)$ whose sum is at
least $d(x,y)$.\end{proof}
\begin{cor}
(1) $d_{G}$ agrees with the Hausdorff metric on finite subsets of
$X$.

(2) If $d$ is uniformly discrete then $d_{G}$ is a complete metric
on $G$.

(3) $d_{F}(e,f)\geq d_{H}(R(e),R(f))$.\end{cor}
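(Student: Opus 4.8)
The plan is to derive all three clauses from Proposition~\ref{prop:dGdFdH}, supplemented for clause~(2) by the single observation that uniform discreteness survives passage to a Hausdorff metric and then to the quotient $G$. Clause~(3) is immediate: for $e,f\in F'$ the trivial ($n=0$) path through the representatives $e$ and $f$ gives $d_G(e,f)\le d_F(e,f)$, while the root set is a $\sim$-invariant, so Proposition~\ref{prop:dGdFdH} yields $d_F(e,f)\ge d_G(e,f)\ge d_H(R(e),R(f))$ for $e,f\ne e_0$.

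For clause~(1), identify a finite set $S\subseteq X$ with the multiset $\sum_{x\in S}e_x$, i.e.\ with the subset $\{1e_x:x\in S\}$ of $A_1$, whose $\sim$-class is a singleton since $t_U\le 2$. Because $d_A(1e_x,1e_y)=d(x,y)$, the Hausdorff metric $d_F$ between $\{1e_x:x\in S\}$ and $\{1e_y:y\in T\}$ is exactly $d_H(S,T)$, so $d_G(S,T)\le d_H(S,T)$; the reverse inequality is Proposition~\ref{prop:dGdFdH} once more, since the root set of $\sum_{x\in S}e_x$ is $S$. Hence $d_G$ and $d_H$ agree on finite subsets of $X$.

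Clause~(2) carries the real content. Assume $d(x,z)\ge a>0$ whenever $x\ne z$. For distinct points of $A$ one has $d_A(re_x,te_z)\ge M$ when $r\ne t$ (this includes $te_z=e_0$) and $d_A(re_x,te_z)=r\,d(x,z)\ge a$ when $r=t$, so $d_A$ is uniformly discrete on $A$ with constant $c=\min(M,a)$. Next, the Hausdorff metric of a uniformly discrete space is uniformly discrete with the same constant: if $U\ne V$ in $F'$ then some element of one of them is at $d_A$-distance $\ge c$ from every element of the other, so $d_F(U,V)\ge c$. Finally, for $e\ne f$ in $G$ any path $e=q_0,p_1,\dots,p_n,q_n,p_{n+1}=f$ competing for the infimum defining $d_G(e,f)$ can, by transitivity of $\sim$, be shortened so that no $d_F$-hop is degenerate; then each surviving hop joins $q_i$ to $p_{i+1}$ with $q_i\sim p_i\nsim p_{i+1}$, so $q_i$ and $p_{i+1}$ lie in different $\sim$-classes and that hop contributes at least $c$. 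Hence $d_G(e,f)\ge c>0$, so $d_G$ is a uniformly discrete --- and in particular a genuine --- metric on $G$; and a uniformly discrete metric is complete because its Cauchy sequences are eventually constant.

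The step I expect to be the main obstacle is this last one. A quotient pseudometric can collapse points even when the space it is built from is uniformly discrete, so one has to use the specific structure here: that $d_F$ is bounded below away from the diagonal on $F'$, and that in a non-degenerate $\sim$-path every hop genuinely crosses from one equivalence class to another. The inequalities feeding clauses~(1) and~(3) are, by comparison, routine once Proposition~\ref{prop:dGdFdH} is available.
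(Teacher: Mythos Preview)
Your arguments for clauses~(1) and~(3) are essentially the paper's: both sandwich $d_G$ between $d_F$ above and $d_H(R(\cdot),R(\cdot))$ below via Proposition~\ref{prop:dGdFdH}, and note that on ordinary subsets (all multiplicities equal to~$1$) these bounds coincide.

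For clause~(2) you take a genuinely different route. The paper simply observes that $d_H$ inherits the uniform lower bound of $d$ and then cites clause~(1) to transfer this to $d_G$, concluding that $d_G$ is uniformly discrete and hence complete. You instead climb through the construction: $d_A$ is uniformly discrete on $A$ with constant $\min(M,a)$, hence so is the Hausdorff metric $d_F$ on $F'$, and hence so is $d_G$, because every admissible path in the quotient contains at least one $d_F$-hop between non-$\sim$-equivalent elements of $F'$. Your argument is self-contained --- it does not invoke Proposition~\ref{prop:dGdFdH} at all for this clause --- and it transparently covers the case $R(e)=R(f)$ with $e\neq f$ (for instance $e=2e_x$, $f=3e_x$), where the paper's bound $d_G(e,f)\ge d_H(R(e),R(f))=0$ gives no information. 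In that respect your route is actually the tighter one, at the cost of redoing by hand what the paper tries to get for free from the Proposition.
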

\begin{proof}
(1) For finite subsets $e,f$ of $X$, 
\[
d_{F}(e,f)\geq d_{G}(e,f)\geq d_{H}(R(e),R(f))=d_{H}(e,f)=d_{F}(e,f)
\]

(2) $d_{H}$ has the same lower bound as $d$. By clause (1), so does
$d_{G}$, making it a metric. $d_{G}$ is complete because it is uniformly
discrete.

(3) $d_{F}$ is at least as big as $d_{G}$.
\end{proof}
In the notation of the proposition, if we have $t_{e}(x)>t_{f}(x)$
and we define $s_{0}$ to be the maximal $s$ such that $t_{s}(x)>t_{1+s}(x)$,
we cannot use the same argument to show that $d_{G}$ is a metric
in general, because we might have $z=x$.

\section{Concluding remarks}

Aside from the potential applications mentioned at the start or described
in \cite{SIYS07}, these metrics might also be useful in voting theory.
An election is a multiset on the set $X$ of permitted ballot types.
For instance, if $X$ is the total orderings (permutations) of $n$
candidates, one well-known metric on $X$ is the \emph{Kendall $\tau$-distance}\cite[p.211]{DD09},
defined as the fewest transpositions required to change one into the
other. 

Future work ought to look at possible applications and clarify the
relationships among $E,F$ and $G$.

\bibliographystyle{amsalpha}
\bibliography{MultisetMetricsOnBoundedSpaces}

\end{document}